\newtheorem{theorem}{Theorem}[section]
\newtheorem{lemma}[theorem]{Lemma}
\theoremstyle{definition}
\newtheorem{definition}[theorem]{Definition}
\theoremstyle{remark}
\title{The automorphism group of Rauzy diagrams}
\author{Corentin Boissy}
\address{Institut de Math\'ematiques de Toulouse \\ 
Universit\'e Toulouse 3 \\
118 route de Narbonne \\
31062 Toulouse, France
}
\email{corentin.boissy@math.univ-toulouse.fr}
\begin{document}
\begin{abstract}
We give a description of  the automorphism group of a Rauzy diagram as a subgroup of the symmetric group. This is based on an example that appear in some personnal notes of Yoccoz that are to be published in the project ``Yoccoz archives''.
\end{abstract}

\maketitle
\section{Introduction}
Rauzy induction was introduced in \cite{Rauzy} as a tool to study interval exchange maps. It is a renormalization process that associates to an interval exchange map, another one obtained as a first return map on a well chosen subinterval.
After the major work of Veech~\cite{Veech82}, the Rauzy induction became a powerful tool to study the Teichmüller geodesic flow. 

A slightly different tool was used by Kerckhoff \cite{Kerckhoff}, Bufetov \cite{Buf}, and Marmi-Moussa-Yoccoz~\cite{MMY}. 
It is obtained after labeling the intervals, and keeping track of them during the renormalization process. This small change was a significative improvement, and was used more recently to prove other important results about the Teichmüller geodesic flow, for instance the simplicity of the Liapunov exponents (Avila-Viana \cite{Avila:Viana}), or the exponential decay of correlations (Avila-Gouezel-Yoccoz \cite{AGY}). See also \cite{Gutier}.

This labeling induces a non trivial automorphism group on the Rauzy diagram $\mathcal{D}$, that corresponds to relabellings that preserves $\mathcal{D}$. The computation of the cardinal of $Aut(\mathcal{D})$ was done by the author in \cite{B:labeled}, by considering a geometric interpretation in terms of a moduli space of \emph{labeled} translation surfaces. However, the precise description of the elements of $Aut(\mathcal{D})$, as permutation elements was not done. In some personnal notes (annoted by the author) that are to be published in the project ``Yoccoz archives'',  Yoccoz \cite{Yoc} gives an extensive description of many ``small'' Rauzy classes, including the one corresponding to the stratum $\mathcal{H}(1,1,1,1)$ (see Section~20 of \cite{Yoc}). In particular, we can find for such Rauzy class a nice description of the automorphism group. Based on this example, we propose a similar description for the automorphism group of any (non hyperellliptic) Rauzy class.

\subsection*{Acknowledgements}
The author thanks Carlos Matheus and Pascal Hubert for giving me the text of Yoccoz that started this project and  encouraging me to write this paper. The project was partially supported by the labex CIMI project ``GEOMET''.

\section{Background}
\subsection{Combinatorial definition}
\begin{definition}
Let $\mathcal{A}$ be a finite alphabet that consists of $d$ elements. A \emph{labeled permutation} is a pair $\pi=(\pi_t,\pi_b)$ of one-to-one maps from $\mathcal{A}$ to $\{1,\dots ,d\}$. We usually represent $\pi$ by a table:
\begin{eqnarray*}
{\pi}= \left(\begin{array}{ccccc}\pi_t^{-1}(1)&\pi_t^{-1}(2)&\ldots&\pi_t^{-1}(d)
  \\
\pi_b^{-1}(1)&\pi_b^{-1}(2)&\ldots&\pi_b^{-1}(d) \end{array}\right).
\end{eqnarray*}
\end{definition}

A \emph{renumbering} of a labeled permutation is the composition of $(\pi_t,\pi_b)$ by a one-to-one map $f$ from $\mathcal{A}$ to $\mathcal{A}'$. It just corresponds to changing the labels without changing the underlying permutation $\pi_t\circ \pi_b^{-1}$. 

A labeled permutation is irreducible if for any $k\in \{1,\dots , d-1\}$, $\pi_t^{-1}\{1,\dots ,k\}\neq \pi_b^{-1}\{1,\dots ,k\}$.

We define the following maps on the set of irreducible permutations, called the combinatorial Rauzy moves.

\begin{enumerate}
\item {$\mathcal{R}_t$}: let $k=\pi_b(\pi_t^{-1}(d))$ with $k\leq d-1$. 
Then, $\mathcal R_t(\pi_t,\pi_b)=(\pi_t',\pi_b')$ where $\pi_t=\pi_t'$ and
$$
\pi_b'^{-1}(j) = \left\{
\begin{array}{ll}
\pi_b^{-1}(j) & \textrm{if $j\leq k$}\\
\pi_b^{-1}(d) & \textrm{if $j = k+1$}\\
\pi_b^{-1}(j-1) & \textrm{otherwise.}
\end{array} \right.
$$

\item {$\mathcal{R}_b$}: let $k=\pi_t(\pi_b^{-1}(d))$ with $k \leq
d-1$. Then, $\mathcal R_b(\pi_t,\pi_b)=(\pi_t',\pi_b')$ where $\pi_b=\pi_b'$ and
$$
\pi_t'^{-1}(j) = \left\{
\begin{array}{ll}
\pi_t^{-1}(j) & \textrm{if $j\leq k$}\\
\pi_t^{-1}(d) & \textrm{if $j = k+1$}\\
\pi_t^{-1}(j-1) & \textrm{otherwise.}
\end{array} \right.
$$
\end{enumerate}

\begin{definition}
A \emph{Rauzy class}, is a minimal  set of labeled  permutations invariant by the combinatorial  Rauzy moves. 

A \emph{Rauzy diagram},  is a graph whose vertices are the elements of a Rauzy class and whose vertices are the combinatorial Rauzy moves.

An automorphism of a Rauzy diagram $\mathcal{D}$ with vertices $\mathcal{R}$ is 
a graph automorphism of $\mathcal{D}$ that sends a ``$t$'' edge (resp. a ``$b$'' edge) to a ``$t$'' edge (resp. a ``$b$'' edge). We can show that it is just given by a renumbering of the vertices, \emph{i.e}, an element of of the symmetric group $\mathfrak{S}(\mathcal{A})$
\end{definition}

\subsection{Links to translation surfaces}\label{link:to:transl}
A \emph{translation surface} is a (real, compact, connected) genus $g$ surface $S $ with a translation atlas \emph{i.e.} a triple $(S,\mathcal U,\Sigma)$ such that $\Sigma$ is a finite subset of $S$ (whose elements are called {\em singularities}) and $\mathcal U = \{(U_i,z_i)\}$ is an atlas of $S \setminus \Sigma$ whose transition maps are translations of $\mathbb{C}\simeq \mathbb{R}^2$. We will require that  for each $s\in \Sigma$, there is a neighborhood of $s$ isometric to a Euclidean cone whose total angle is a multiple of $2\pi$.  One can show that the holomorphic structure on $S\setminus \Sigma$ extends to $S$ and that the holomorphic 1-form $\omega=dz_i$ extends to a holomorphic $1-$form on $X$ where  $\Sigma$ corresponds to the zeroes of $\omega$ and maybe some marked points. We usually call $\omega$ an \emph{Abelian differential}.  A zero of $\omega$ of order $k$ corresponds to a singularity of angle $(k+1)2\pi$.

For $g \geq 1$, we define the moduli space of Abelian differentials $\mathcal{H}_g$ as the moduli space of pairs $(X,\omega)$ where $X$ is a genus $g$ (compact, connected) Riemann surface and $\omega$ non-zero holomorphic $1-$form defined on $X$. The term moduli space means that we identify the points $(X,\omega)$ and $(X',\omega')$ if there exists an analytic isomorphism $f:X \rightarrow X'$ such that 
$f^*\omega'=\omega$.

One can also see a translation surface obtained as a polygon (or a finite union of polygons) whose sides come by pairs, and for each pair, the corresponding segments are parallel and of the same length. These parallel sides are glued together by translation and we assume that this identification preserves the natural orientation of the polygons. In this context, two translation surfaces are identified in the moduli space of Abelian differentials if and only if the corresponding polygons can be obtained from each other by cutting and gluing and preserving the identifications. 

The moduli space of Abelian differentials is stratified by the  combinatorics of the zeroes; we will denote by $\mathcal{H}(k_1^{n_1},\ldots ,k_r^{n_r})$ the stratum of $\mathcal{H}_g$ consisting of (classes of) pairs $(X,\omega)$ such that $\omega$ has exactly $n_1$ zeroes of order $k_1$, etc\dots   It is well known that  this space is  (Hausdorff) complex analytic.

There is a natural construction, the Veech construction (or zippered rectangle construction),  that associate to an element $\pi \in \mathcal{R}$, and a continuous datum $\zeta\in \mathbb{C}^d$, satisfying the ``suspension data condition'', a translation surface $S(\pi,\zeta)$ (see for instance \cite{MMY}, or \cite{B:labeled} for details). See Figure~\ref{complab}.
Different choices of parameter $\zeta$ give surfaces in the same connected component of a stratum in the moduli space of Abelian differentials since the set of suspension data is connex (in fact, convex).

The Veech construction, with the associate Rauzy--Veech induction defines three natural invariants of a Rauzy class:
\begin{enumerate}
\item The set with multiplicities of degree of the conical singularities of $S(\pi,*)$, \emph{i.e.} the stratum where $S(\pi,*)$ is.
\item When such stratum is non-connected, the corresponding connected component .
\item The degree of the singularity attached on the left in the construction (denoted as the \emph{special singularity}).
\end{enumerate}
It is proven in \cite{B:RC} that if two labeled permutations share the same above invariant, then up to a renumbering, they are in the same Rauzy class.

In order to have a complete classification of Rauzy classes, one needs a refinement of the invariant $(1)$. From \cite{B:labeled}, the letters in the alphabet induce a marking on the set of horizontal separatrices of each singularities, where each (outgoing) horizontal separatrix is marked by a letter, and the one corresponding to the interval in the Veech construction is marked twice (by $\pi_t^{-1}(1)$ and $\pi_b^{-1}(1)$). 

\begin{figure}[htb]
\begin{center}
\labellist
\tiny 
\hair 2pt
\pinlabel $AB$ at 10 43
\pinlabel $C$ at 55 58
\pinlabel $D$ at 125 65
\normalsize
\pinlabel $\zeta_A$ at 16 62 
\pinlabel $\zeta_B$ at 40 58
\pinlabel $\zeta_C$ at 80 60
\pinlabel $\zeta_D$ at 150 58
\pinlabel $\zeta_B$ at 8 25 
\pinlabel $\zeta_D$ at 50 5
\pinlabel $\zeta_C$ at 120 0
\pinlabel $\zeta_A$ at 165 20
\endlabellist

\includegraphics[scale=1.4]{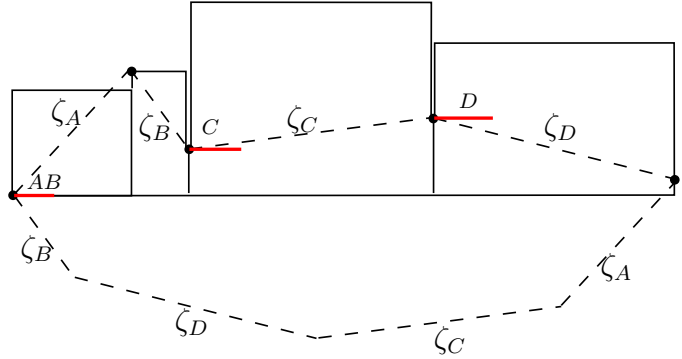}
\caption{A framing of a surface issued from the Veech construction}

\label{complab}
\end{center}
\end{figure}

Considering the corresponding moduli space $\mathcal{H}^{lab}$ of \emph{labeled} translation surfaces (\emph{i.e.} translation surfaces with such markings), we see that different choices of parameters $\zeta$ define surface in the same connected component of $\mathcal{H}^{lab}$.  In \cite{B:labeled}, it is proven that two labeled permutations are in the same Rauzy class if and only if:
\begin{enumerate}
\item The letters on the top left and bottom left are the same (they will be denoted by $\pm \infty $ as in \cite{Yoc}).
\item The canonical cyclic order on the set of labels obtained by rotating clockwise around a singularity must be the same (for one, hence any choice of surfaces built from the labeled permutations).
\item The resulting labeled translation surfaces are in the same connected component of $\mathcal{H}^{lab}$.
\end{enumerate}

Furthermore, once the cyclic order on $\mathcal{A}$  and the underlying connected component of the (nonlabeled) translation surface are fixed, we have (see \cite{B:labeled}, Theorem~1.1 and Theorem~1.3):
\begin{enumerate}
\item If there are odd degree singularity, then there exists two such Rauzy classes.
\item Otherwise there is only one such Rauzy class.
\end{enumerate}

\subsection{A surgery on translation surfaces}\label{def:breaking}
We describe the surgery called ``breaking up a singularity'',  introduced by Kontsevich and Zorich in \cite{KoZo}. 
We start from a zero $P$  of degree $k_1+k_2$. The neighborhood $V_\varepsilon=\{x\in X,  d(x,P)\leq \varepsilon\}$ of this conical singularity is obtained by considering $2(k_1+k_2)+2$ Euclidean half disks of radii $\varepsilon$  and gluing each half side of them to another one in a cyclic order. We can break the zero of order $k_1+k_2$ into a pair of singularities of order $k_1,k_2$  by changing continuously the way they are glued to each other as in Figure~\ref{break:zero}. Note that in this surgery, the metric is not modified outside $V_\varepsilon$. In particular, the boundary $\partial V_\varepsilon$   is isometric to (a connected  covering) of an Euclidean circle. Note that in this construction, we can ``rotate'' the two singularities by an angle $\theta$ by cutting the surface along $\partial V_\varepsilon$, rotating $V_\varepsilon$ by an angle $\theta$ and regluing it.

\begin{figure}[htb]
\begin{center}
\labellist
\tiny 
\hair 2pt
\pinlabel $\varepsilon$ at 35 55
\pinlabel $\varepsilon$ at 35 80
\pinlabel $\varepsilon$ at 100 55
\pinlabel $\varepsilon$ at 100 80
\pinlabel $\varepsilon$ at 60 20
\pinlabel $\varepsilon$ at 60 100

\pinlabel $\varepsilon-\delta$ at 320 55
\pinlabel $\varepsilon-\delta$ at 320 80
\pinlabel $\varepsilon-\delta$ at 385 55
\pinlabel $\varepsilon-\delta$ at 385 85
\pinlabel $\varepsilon+\delta$ at 370 20
\pinlabel $\varepsilon+\delta$ at 370 115
\pinlabel $6\pi$  at 120 0
\pinlabel $4\pi+4\pi$  at 400 0
\pinlabel $\partial V_\varepsilon$ at 280 145 
\endlabellist
\includegraphics[scale=0.6]{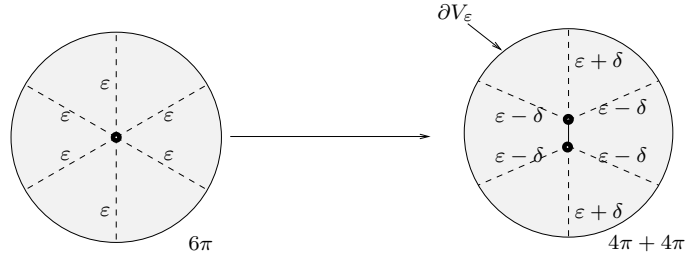}
\caption{Local surgery that break a zero of degree $k_1+k_2$ into two zeroes of degree $k_1$ and $k_2$ respectively.}
\label{break:zero}
\end{center}
\end{figure}

\subsection{Yoccoz's example}
We start with a geometrical interpretation of Yoccoz's example (see \cite{Yoc} Section 20.1). 
We consider the alphabet $\mathcal{A}=\{\pm \infty , 0, a_1,a_2,b_1,b_2, c_1,c_2\}$ and the following element:
$$
 \pi=\left ( \begin{array}{rllllllll} -\infty&b_2&a_2&b_1&a_1&c_1&0&c_2&\infty\\\infty&b_1&a_2&b_2&a_1&c_2&0&c_1&-\infty \end{array}\right )
$$
 The Veech construction with labels creates a translation surface in the stratum $\mathcal{H}(1,1,1,1)$. Each singularity has angle $4\pi$, hence gives two labeled separatrices where: 
\begin{itemize}
\item The doubly labeled separatrix is $\pm \infty $, and the other separatrix of the corresponding singularity is labeled ``0''.
\item The other three pairs of marked separatrices are $\{a_1,a_2\}$, $\{b_1,b_2\}$, and $\{c_1,c_2\}$.
\end{itemize}

As described in \cite{Yoc}, 
the subgroup $\mathcal{G}'$ of $\mathfrak{S}(\mathcal{A})$ preserving these pairings has order $2^3\times 3!=48$. The automorphism group of the Rauzy diagram $\mathcal{G}$ has order $\frac{48}{2}=24$ and is a subgroup of $\mathcal{G}'$. Hence we want to construct an morphism $\phi$ from $\mathcal{G}'$ to $\{\pm 1\}$ such that $\mathcal{G}=\ker(\phi)$.

There is a natural map from $\mathcal{G}'$ onto $\mathfrak{S}(a,b,c)$, whose kernel $N$ identifies with $\{-1,1\}^3$. For $\tau\in \mathfrak{S}(a,b,c)$, we define $\sigma(\tau)\in \mathcal{G}'$ by $\sigma(\tau)(\alpha_i)=\tau(\alpha)_i$, for $\alpha\in \{a,b,c\}$. Hence the extension $1\rightarrow N\rightarrow \mathcal{G}' \rightarrow \mathfrak{S}(a,b,c) \rightarrow 1$ is a split extension. 
Therefore, the group $\mathcal{G}'$ has a natural semi-direct structure $\mathcal{G}'= N \rtimes H\sim \{\pm 1\}^3 \rtimes \mathfrak{S}(a,b,c)$ with $H=\sigma(\mathfrak{S}(a,b,c))$.

{\bf Claim}: Under the above identification, $\mathcal{G}$ is the kernel of the map $f:((\varepsilon_a,\varepsilon_b,\varepsilon_c), \tau)\to \varepsilon_a\varepsilon_b\varepsilon_c sgn(\tau)$, where $sgn$ is the signature.

To prove the claim, we observe that:
\begin{itemize}
\item $f(\varepsilon, \tau)=f_1(\varepsilon)f_2(\tau)$, with $f_1$ a morphism from $\{\pm 1\}^3$ to $\{\pm 1\}$, and $f_2$ is a morphism from $\mathfrak{S}(a,b,c)$ to $\{\pm 1\}$, hence is either identity or the signature.
\item The elements $((-1,1,1),Id)$, $((1,-1,1),Id)$ and $((1,1,-1),Id)$ are \emph{not} in $\mathcal{G}$ from \cite{B:labeled}. Hence, $f_1(\varepsilon_a,\varepsilon_b,\varepsilon_c)=\varepsilon_a\varepsilon_b\varepsilon_c$. 
\end{itemize}
It remains to prove that $f_2\neq Id$. In this particular example (see the author's comment in \cite{Yoc}), this can be easily proven by exhibiting a particuliar path in the Rauzy diagram. 

In Lemma~\ref{lemme:geometrique}, we use a geometric construction to get an element $g=((\varepsilon_a,\varepsilon_b,1),(a,b))\in \mathcal{G}$ (hence with $\varepsilon_a\varepsilon_b f_2((a,b))=1$) such that $g^2=((-1,-1,1),Id)\neq 1_\mathcal{G}$. But a computation gives  $g^2=((\varepsilon_a\varepsilon_b,\varepsilon_a\varepsilon_b,1),Id))$. This implies $f_2((a,b))=-1$, hence $f_2=sgn$.

\section{General case}
 Let $\mathcal{D}$ be a (nonhyperelliptic) Rauzy diagram whose associate stratum is  $\mathcal{H}(k,k_1^{n_1},\dots ,k_{r}^{n_r})$, and we assume that the degree of the special (left) singularity is $k$. From  \cite{B:labeled} the order of the group $Aut(\mathcal{D})$ is:
$$\varepsilon { \prod_{i=1}^r n_i!(k_i+1)^{n_i}}$$
where $\varepsilon=1$ if all $k_i$ are even and $\frac{1}{2}$ otherwise.
There is a one-to-one map $T:\mathcal{A}\to \mathcal{A}$, such that $T(-\infty )=+\infty $, and otherwise rotating counterclockwise the separatrix $I_\alpha$ by $2\pi$ gives the separatrix $I_{T(\alpha)}$.

We consider the orbits in $\mathcal{A}$ for the action of $T$. Denote by \emph{special orbit} the one containing $\pm \infty $n and by regular orbits the other ones.For $i\in \{1,\dots ,r\}$ there are $n_i$ regular orbits of length $k_i+1$. We denote $\theta_i$ the set of orbits of length $k_i+1$, 
$\Theta_i=\{ \Theta_{i,j}, \ j\in \{1,\dots ,n_i\}$, and we chose for each $(i,j)$ an element $\alpha_{i,j}\in \Theta_{i,j}$. 

An element $\sigma\in Aut(\mathcal{D})\subset \mathfrak{S}(\mathcal{A})$   satisfies $\sigma\circ T=T\circ\sigma$. In particular, $\sigma$ induces a permutation on the set of (regular) orbits of the same size, and $\sigma$ fixes all the elements of the special orbit.

This property defines a subgroup $\mathcal{G}'$ of $\mathfrak{S}(\mathcal{A})$ of order $\prod_{i=1}^r n_i!(k_i+1)^{n_i}$. If all singularities are of even order, then $\mathcal{G}'=Aut(\mathcal{D})$ and there is nothing to do. If there are singularities of odd order, then $\mathcal{G}=Aut(\mathcal{D}$ is a subgroup of order 2 of $\mathcal{G}'$. We will define a nontrivial homomorphism $\phi$ for which $\mathcal{G}$ is the kernel.

First, we observe that $\mathcal{G}'=\prod_{i=1}^r \mathcal{G}'_i$ where $\mathcal{G}'_i$ are elements of $\mathcal{G}'$ whose support are in $Supp(\Theta_i)=\sqcup_{j} \Theta_{i,j}$. 

For each $i\in \{1,\dots ,r\}$, there is a natural map from $\mathcal{G}_i'$ onto  the group $\mathfrak{S}(\{1,\dots ,n_i\})$, whose kernel $N_i$ identifies with $U_{k_i+1}^{n_i}$, where $U_{p}$ denotes the group of complex $p-$roots of unity. For $\tau\in \mathfrak{S}(\{1,\dots ,n_i\})$, we define $\sigma_i(\tau)\in \mathcal{G}'_i$ by $\sigma_i(\tau)(\alpha_{i,j})=\alpha_{i,\tau(j)}$, and extended on $Supp(\Theta_i)$ by using $T$. Hence the extension $1\rightarrow N_i\rightarrow \mathcal{G}_i' \rightarrow \mathfrak{S}(\{1,\dots ,n_i\}) \rightarrow 1$ is a split extension. 
Therefore, the group $\mathcal{G}_i'$ has a natural semi-direct structure $\mathcal{G}_i'= N_i \rtimes H_i\sim U_{k_i+1}^{n_i}\rtimes \mathfrak{S}(\{1,\dots ,n_i\})$ with $H_i=\sigma(\mathfrak{S}(\{1,\dots ,n_i\}))$.

In order to simplify notations, we identify $\mathcal{G}_i'$ with the image of the canonical homomorphism $\mathcal{G}_i'\to \mathcal{G}'$.

Now the group $\mathcal{G}$ is a subgroup of index 2 of $ \mathcal{G}'$. Hence, is the kernel of a homomorphism $\phi:\mathcal{G} \to \{\pm 1\}$, that we write, with a slight abuse of notation as $\prod_i \phi_i$, where $\phi_i: \mathcal{G}_i' \to \{\pm 1\}$ is a homomorphism.

\begin{theorem}
Under the above identification, we have:
\begin{itemize}
\item If $k_i$ is even, then $\phi_i=Id$.
\item Otherwise, let $2p_i=k_i+1$, then:
$$\phi_i((\zeta_1,\dots,\zeta_{n_i}),\tau_i)=(\zeta_1\dots \zeta_{n_i})^{p_i} sgn(\tau_i).$$
where $sgn$ is the signature.
\end{itemize}
\end{theorem}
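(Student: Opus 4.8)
The plan is to follow the same three-step pattern as in Yoccoz's example, carried out one index $i$ at a time. Since everything takes place inside $\mathcal{G}'_i = N_i \rtimes H_i$ with $N_i \cong U_{k_i+1}^{n_i}$ and $H_i \cong \mathfrak{S}(\{1,\dots,n_i\})$, and since the target $\{\pm 1\}$ is abelian, I would first constrain the shape of $\phi_i$ by pure group theory. Any homomorphism $\phi_i \colon \mathcal{G}'_i \to \{\pm 1\}$ kills commutators, so its restriction $\phi_i|_{N_i}$ is invariant under the conjugation action of $H_i$, which permutes the $n_i$ factors of $N_i$. Hence $\phi_i|_{N_i}(\zeta_1,\dots,\zeta_{n_i}) = \prod_j \chi(\zeta_j)$ for a single character $\chi\colon U_{k_i+1}\to\{\pm1\}$, while $\phi_i|_{H_i}$ is a character of $\mathfrak{S}(\{1,\dots,n_i\})$, hence either trivial or the signature. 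Conversely any such pair $(\chi, \phi_i|_{H_i})$ defines a homomorphism, so it remains only to pin down $\chi$ and $\phi_i|_{H_i}$.

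For the character $\chi$ I would use the parity of $k_i$ together with the non-membership result of \cite{B:labeled}. If $k_i$ is even then $k_i+1$ is odd, the cyclic group $U_{k_i+1}$ has no nontrivial character into $\{\pm1\}$, and $\chi$ is forced to be trivial; thus $\phi_i|_{N_i}$ is trivial. If $k_i$ is odd, write $2p_i = k_i+1$; now $U_{k_i+1}$ has a unique nontrivial character into $\{\pm1\}$, namely $\zeta\mapsto\zeta^{p_i}$. The generator of a single factor of $N_i$ is exactly the element rotating one degree-$k_i$ singularity by $2\pi$ (the map $T$ on one orbit); by \cite{B:labeled} this element is not in $\mathcal{G}$ when the singularity has odd order, so it must not lie in $\ker\phi_i$, which forces $\chi$ to be the nontrivial character. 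This gives $\phi_i|_{N_i}(\zeta_1,\dots,\zeta_{n_i}) = (\zeta_1\cdots\zeta_{n_i})^{p_i}$.

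To determine $\phi_i|_{H_i}$ I would invoke Lemma~\ref{lemme:geometrique}, the geometric heart of the argument, which for any two regular orbits $\Theta_{i,j},\Theta_{i,j'}$ of the same degree $k_i$ produces an element $g=(\zeta,\tau)\in\mathcal{G}$ with $\tau$ the transposition $(j\ j')$, support inside $Supp(\Theta_i)$, and whose square $g^2$ is the element of $N_i$ rotating each of the two swapped singularities by exactly $2\pi$ (and fixing all others). A direct computation in the semidirect product gives $g^2=(\zeta\cdot\tau(\zeta),Id)$, whose $j$-th and $j'$-th coordinates both equal $\zeta_j\zeta_{j'}$ and whose other coordinates are $1$; comparing with the geometric value forces $\zeta_j\zeta_{j'}$ to be the generator $\omega=e^{2\pi i/(k_i+1)}$ of $U_{k_i+1}$. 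Since $g\in\mathcal{G}=\ker\phi$ and $g$ is supported in the $i$-th block, $\phi_i(g)=1$, i.e. $(\zeta_j\zeta_{j'})^{p_i}\,\phi_i|_{H_i}(\tau)=1$ in the odd case. As $\omega^{p_i}=e^{\pi i}=-1$, this yields $\phi_i|_{H_i}(\tau)=-1$, so $\phi_i|_{H_i}$ is nontrivial on a transposition and hence equals the signature; combined with the previous step this is exactly the claimed formula. In the even case $\phi_i|_{N_i}$ is already trivial, so $\phi_i(g)=\phi_i|_{H_i}(\tau)=1$ directly, giving $\phi_i|_{H_i}$ trivial and $\phi_i=Id$.

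The main obstacle is Lemma~\ref{lemme:geometrique}: realizing $g$ as an honest automorphism of $\mathcal{D}$ and, above all, computing $g^2$ precisely. I would construct the underlying loop in the unlabeled moduli space by merging the two degree-$k_i$ singularities into one zero of degree $2k_i$ and then rotating the breaking surgery of Section~\ref{def:breaking} around the cone point; the monodromy on the labeled cover $\mathcal{H}^{lab}$ is $g$. The delicate point, which I expect to require careful bookkeeping of the $k_i+1$ separatrices at each singularity through the cut--rotate--reglue, is to check that going around this loop interchanges the two orbits and that doing it twice returns each separatrix to the \emph{next} one, i.e. that the residual twist is precisely a single $2\pi$ rotation on each singularity rather than a higher even multiple; this oddness is exactly what makes $\zeta_j\zeta_{j'}$ an odd power of $\omega$ and is the feature that detects the signature. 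Everything else is the routine semidirect-product computation already illustrated in the $\mathcal{H}(1,1,1,1)$ example.
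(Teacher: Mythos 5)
Your proposal is correct and follows essentially the same route as the paper: pin down the character on $N_i$ via the fact (from \cite{B:labeled}) that the $2\pi$-rotation of a single odd-degree singularity is not in $\mathcal{G}$, then use the element $g_i$ of Lemma~\ref{lemme:geometrique}, whose square is the simultaneous $2\pi$-rotation of the two swapped singularities, to force the $H_i$-component to be the signature. Your explicit group-theoretic reduction (conjugation-invariance of $\phi_i|_{N_i}$, the order argument in the even case, and the extension of the lemma's construction to even $k_i$ to conclude $\phi_i=Id$ there) only makes explicit what the paper leaves implicit.
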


\begin{proof}
Let $\zeta=\exp(\frac{2i \pi}{2p_i})$. 
From the proof of Proposition~4.2 in \cite{B:labeled} (see also \cite{B:markedpoles}, Section~5.1), the element $((1,\dots ,1,\zeta,1,\dots ,1),Id)$ is \emph{not} in $\mathcal{G}$ since it corresponds to rotating the separatrices adjacent to an odd degree singularity by $2\pi$, and preserving all the other separatrices. Hence, we have $\phi_i((1,\dots ,1,\zeta,1,\dots ,1),Id)=-1=\zeta^{p_i}$. Hence $\phi_i((\zeta_1,\dots,\zeta_{n_i}),Id)=(\zeta_1\dots \zeta_{n_i})^{p_i}$.
The map $f_i:\tau_i\to \phi_i((1,\dots ,1), \tau_i)$ is a homomorphism hence is either the signature or the identity map.
Let $g_i$ be the element given by Lemma~\ref{lemme:geometrique}. A simple computation gives, $g_i^2=((\zeta_1\zeta_2,\zeta_1\zeta_2,1,\dots ,1),Id)$, hence $$g_i^{2p_i}=((\zeta_1^{p_i}\zeta_2^{p_i},\zeta_1^{p_i}\zeta_2^{p_i},1\dots ,1), Id)\neq 1$$
But $\phi_i(g_i)=(\zeta_1\zeta_2)^{p_i} f_i((1,2))$, hence $f_i((1,2))=-1$. 
\end{proof}

\begin{lemma}\label{lemme:geometrique}
For each $i$ such that $k_i$ is odd, there exists an element $g_i\in \mathcal{G}\cap \mathcal{G}_i$ of the form $((\zeta_1,\zeta_2,1,\dots ,1),(1,2))$ such that $g_i^2$ is of order $2p_i=k_i+1$.
\end{lemma}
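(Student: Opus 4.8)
The plan is to realize elements of $\mathcal{G}$ geometrically, as the monodromy of the marking of horizontal separatrices along loops in the fixed connected component of the stratum. By the discussion of Section~\ref{link:to:transl}, two labelings give permutations in the same Rauzy class exactly when the associated labeled surfaces lie in the same connected component of $\mathcal{H}^{lab}$; thus an element of $\mathcal{G}$ is nothing but the relabeling induced by a closed path in the fixed connected component, transported through $\mathcal{H}^{lab}$. To produce $g_i$ I would fix a surface $S=S(\pi,\zeta)$ in the component, single out the two singularities $P_1,P_2$ of degree $k_i$ carrying the orbits $\Theta_{i,1},\Theta_{i,2}$, and build a loop supported in a small neighborhood of $\{P_1,P_2\}$. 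Keeping the rest of the surface fixed forces the resulting automorphism to have support in $\Theta_{i,1}\sqcup\Theta_{i,2}$ and, since the loop exchanges the two singularities, to have permutation part $(1,2)$; hence it is automatically of the prescribed form $((\zeta_1,\zeta_2,1,\dots,1),(1,2))$.

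The loop itself I would construct from the surgery of Section~\ref{def:breaking}. Bringing $P_1$ and $P_2$ together along a short saddle connection presents them as a breaking of a single cone point of degree $2k_i$, whose neighborhood $V_\varepsilon$ is a Euclidean cone of angle $(2k_i+1)2\pi$. Because $P_1$ and $P_2$ have the \emph{same} degree, rotating $V_\varepsilon$ so as to interchange the two broken singularities and regluing returns the configuration to an isometric one; letting the rotation angle vary continuously traces a closed path in the stratum. The content to be checked is that this path lies entirely in the fixed connected component of $\mathcal{H}^{lab}$ --- so that $g_i$ genuinely belongs to $\mathcal{G}$ and not merely to $\mathcal{G}'$ --- which I expect to follow from the connectedness (in fact convexity) of the set of suspension data together with the fact that the breaking surgery does not alter the component.

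It then remains to compute $g_i^2$. Squaring the exchange turns the half-turn into a full relative loop in which each of $P_1,P_2$ encircles the other exactly once. Since a translation surface has trivial linear holonomy, transporting the marking around such a loop does not rotate the horizontal direction; its only effect is a cyclic shift of the $k_i+1$ separatrices of each orbit by the index of a loop enclosing a single cone point, namely $\tfrac{(k_i+1)2\pi}{2\pi}-1=k_i$. In the identification $N_i\simeq U_{k_i+1}^{n_i}$ this shift is $T^{k_i}=T^{-1}=\exp(-2i\pi/(k_i+1))$ on each of the two active coordinates, so that $\zeta_1\zeta_2=T^{-1}$ is a \emph{primitive} $(k_i+1)$-th root of unity and $g_i^2=((T^{-1},T^{-1},1,\dots,1),Id)$ has order $k_i+1=2p_i$. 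As a check, for the stratum $\mathcal{H}(1,1,1,1)$ of Yoccoz's example one has $k_i=1$ and $T^{-1}=-1$, recovering $g^2=((-1,-1,1),Id)$.

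The main obstacle, as indicated above, is the verification that the exchanging loop stays inside the prescribed connected component of the \emph{labeled} moduli space, since this is precisely what distinguishes membership in $\mathcal{G}$ from the a priori larger $\mathcal{G}'$; this is where the non-hyperelliptic hypothesis and the component count recalled from \cite{B:labeled} should enter. A secondary but essential point is to pin down the framing shift exactly, rather than up to an unknown power of $T$, as only the primitivity of $\zeta_1\zeta_2$ guarantees that $g_i^2$ attains the full order $2p_i$ required by the statement.
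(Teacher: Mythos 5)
Your proposal is correct and follows essentially the same route as the paper: realize $g_i$ as the monodromy on labeled separatrices of a loop obtained by presenting the two degree-$k_i$ singularities as a broken-up cone point of degree $2k_i$ (Kontsevich--Zorich), rotating $V_\varepsilon$ by half its cone angle to exchange them, and then checking that the full rotation underlying $g_i^2$ shifts each of the two orbits by a primitive $(k_i+1)$-th root of unity (your $T^{-1}$ versus the paper's $T$ is only an orientation convention and does not affect the order). The one point you flag as open --- that the loop can be closed up inside the fixed component --- is settled in the paper by connecting $S$ to the broken-up surface $S_0$ through $\mathcal{H}^{sing}$, using that the preimage of a connected component of $\mathcal{H}$ in $\mathcal{H}^{sing}$ is connected, so that the pair of exchanged singularities can be taken to be ``1'' and ``2''.
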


\begin{proof}
We have defined in Section~\ref{link:to:transl} the moduli space of translation surfaces with labeled separatrices $\mathcal{H}^{lab}$. We also define the moduli space $\mathcal{H}^{sing}$ of translation surfaces with marked singularities (see \cite{B:labeled} for a precise definition). There are canonical coverings $\mathcal{H}^{lab}\to \mathcal{H}^{sing}$ and $\mathcal{H}^{sing}\to \mathcal{H}$. For any connected component $\mathcal{C}$ of $\mathcal{H}$, the preimage in $\mathcal{H}^{sing}$ is connected (once a proper condition on the label is fixed).

We start from a labeled permutation $\pi$, and choose a suspension datum $\zeta$ to obtain an element $S$ in $\mathcal{H}^{lab}(k,k_1^{n_1},\dots ,k_r^{n_r})$. 

We describe a path in $\mathcal{H}^{lab}(k,k_1^{n_1},\dots ,k_r^{n_r})$ that will induce the element $g_i$ required. Denote by $\mathcal{C}$ the underlying connected component of the (usual) moduli space of Abelian differentials.

From \cite{KoZo}, there exists a surface $S_0\in \mathcal{C}$ obtained after breaking up a singularity of order $2k_i$ into a pair of singularities of order $k_i$. We denote by ``$1$'' and ``$2$''  the singularities corresponding to the permutation in $g_i$. Seeing $S,S_0$ as elements in $\mathcal{H}^{sing}$, we can assume that the pairs of singularities  in $S_0$ after the breaking up procedure are ``1'' and ``2'', and there exists a path $\gamma$ joining $S$ to $S_0$. With the notations of  Section~\ref{def:breaking}, we cut the surface along $\partial V_\varepsilon$, rotate   the disc by an angle $\theta$, one gets a family of surfaces $(S_\theta)$.  One has $S_{\pi(2k_i+1)}=S_0$ in $\mathcal{H}$, and composing with $\gamma^{-1}$, we obtain a closed path in $\mathcal{C}$. Considering the lift of this path in $\mathcal{H}^{lab}$ we see that:
\begin{itemize}
\item The two singularities ``1'' and ``2'' have been interchanged.
\item All the other singularities, and the corresponding labels on horizontal separatrice are fixed, since the surgery (cutting on a circle, rotating, pasting) does not change the metric outside a neighborhood of the two singularities.
\end{itemize}
Hence the resulting element $g_i$ in $Aut(\mathcal{D})$ is in $\mathcal{G}_i'$ and of the form $((\zeta_1,\zeta_2,1,\dots ,1),(1,2))$.


Now we look at $g_i^2$. It corresponds to the following path: consider $\gamma$, then the path $(S_\theta)$, for $\theta\in [0,\dots 2\pi(2k_i+1)]$, then $\gamma^{-1}$.  Keeping track of the marked horizontal separatrices, we see at the end that the marked horizontal separatrices for ``1'', ``2'' have changed by an angle $-2\pi(2k_i+1) \mod 2\pi(k_i+1)$, hence $2\pi$. So we have: 

$$g_i^2=((\zeta,\zeta,1,\dots ,1),Id).$$

With $\zeta=exp(\frac{2i\pi}{k_i+1})$, hence $g_i^2$ is of order $k_i+1$.

\end{proof}

\nocite*
\bibliographystyle{plain}
\bibliography{biblio}

\end{document}